\newtheorem{thm}{Theorem}[section]
\newtheorem{lem}[thm]{Lemma}
\newtheorem{thm-con}[thm]{Theorem-Conjecture}
\numberwithin{equation}{section}
\theoremstyle{definition}
\newcommand{\f}{\Bbb F}
\begin{document}

\title{The M\"obius Function of the Affine Linear Group $\text{AGL}(1,\Bbb F_q)$}

\author[Xiang-dong Hou]{Xiang-dong Hou}
\address{Department of Mathematics and Statistics,
University of South Florida, Tampa, FL 33620}
\email{xhou@usf.edu}

\keywords{affine linear group, finite fields, M\"obius function}

\subjclass[2010]{05E15, 11T99, 20G40}

\begin{abstract}
Let $\text{AGL}(1,\Bbb F_q)$ denote the affine linear group of dimension one over the finite field $\Bbb F_q$. We determine the M\"obius function of the lattice of subgroups of $\text{AGL}(1,\Bbb F_q)$.
\end{abstract}

\maketitle

\section{Introduction}

Let $(X,\le)$ be a partially ordered set (poset) which is locally finite, i.e., for any $x,y\in X$, the closed interval $[x,y]=\{z\in X:x\le z\le y\}$ is a finite set. The M\"obius function of $(X,\le)$ is a function $\mu:X\times X\to\Bbb Z$ such that for all $x,y\in X$,
\[
\sum_{z\in[x,y]}\mu(x,z)=\begin{cases}
1&\text{if}\ x=y,\cr
0&\text{otherwise}.
\end{cases}
\]
The M\"obius function $\mu$ determines the partial order $\le$. When $|X|<\infty$, the M\"obius function allows us to recover a function $f:X\to A$, where $A$ is any abelian group, from its summatory function
\[
f_{\le}(x)=\sum_{z\le x}f(z),\qquad x\in X,
\]
through the M\"obius inversion
\[
f(x)=\sum_{z\le x}\mu(z,x)f_{\le}(z).
\]
For more general background on M\"obius functions, see \cite{Bender-Goldman-AMM-1975, Rota-ZWV-1964}.

Let $G$ be a finite group. The M\"obius function of the lattices of subgroups of $G$, denoted by $\mu_G$, is referred to as the M\"obius function of $G$. For $H<K<G$, we usually write $\mu_G(H,G)=\mu_G(H)$, hence $\mu_G(H,K)=\mu_K(H)$. Since $\mu_G(H,K)$ depends only on $H$ and $K$, we sometimes write $\mu(H,K)$. The M\"obius function $\mu_G$ is an important tool for studying the structure of $G$ and it has a variety of applications \cite{Brown-IM-1975, Brown-Thevenaz-JA-1988, Cameron-Maimani-Omidi-Tayfeh-Rezaie-DM-2006, Cameron-Omidi-Tayfeh-Rezaie-E-JC-2006, Hall-QJM-1936, Hawkes-Isaacs-Ozaydin-RMJM-1989, Thevenaz-JCTA-1987}. P. Hall \cite{Hall-QJM-1936} used $\mu_G$ to compute the Eulerian function $\phi_F(G)$ of $G$ which counts the number of onto homomorphisms from an $n$-generator presentation $F$ to $G$. 

Another application of the M\"obius function $\mu_G$ is in combinatorial designs. Let $G$ act on a finite set $X$ and let $\binom Xk$ denote the set of all $k$-element subsets of $X$. For $H<G$, let $f_k(H)$ be the number of elements of $\binom Xk$ fixed by $H$; $f_k(H)$ is easy to compute once the $G$-orbits of $X$ are known. The sum 
\begin{equation}\label{1.1}
g_k(H)=\sum_{K: H<K<G}\mu(H,K)f_k(K)
\end{equation}
gives the number of $B\in\binom Xk$ such that $G_B=H$, where $G_B=\{\sigma\in G:\sigma(B)=B\}$ is the stabilizer of $B$ in $G$. Assume that the action of $G$ on $X$ is $t$-homogeneous, i.e., the action of $G$ on $\binom Xt$ is transitive. With a given $H<G$, if there exists $B\in\binom Xk$ such that $G_B=H$, then $\{\sigma(B):\sigma\in G\}$ is the set of blocks of a $t-(v,k,\lambda)$ design on $X$ which admits $G$ (modulo the kernel of its action on $X$) as an automorphism group, where $v=|X|$ and 
\[
\lambda=\frac{|G|}{|H|}\cdot\frac{k(k-1)\cdots(k-t+1)}{v(v-1)\cdots(v-t+1)};
\]
see \cite{Beth-Jungnickel-Lenz-1999}. Therefore the determination of the nonvanishingness of the sum \eqref{1.1} for all possible $|H|$ would give all possible parameters of the $t$-designs obtained from the action of $G$ on $X$. Let $\f_q$ denote the finite field with $q$ elements. For the possible parameters of the $3$-designs arising from the actions of $\text{PGL}(2,\f_q)$ and $\text{PSL}(2,\f_q)$ on the projective line $\Bbb P^1(\f_q)$, see \cite{Cameron-Maimani-Omidi-Tayfeh-Rezaie-DM-2006, Cameron-Omidi-Tayfeh-Rezaie-E-JC-2006}. For the possible parameters of the $2$-designs arising from the action of the affine linear group $\text{AGL}(1,\f_q)$ (the group of invertible affine transformations of $\f_q$) on $\f_q$, see \cite{Hou-arXiv1707.02315, Sun-TJM-2010, Sun-JAA-2017}.

It appears that there are only a handful of infinite families of finite groups for which the M\"obius function is known. 

\begin{itemize}
\item 
Finite cyclic groups and finite dihedral groups. See for example \cite[Lemmas~ 18 and 19]{Cameron-Maimani-Omidi-Tayfeh-Rezaie-DM-2006}. 

\medskip

\item
Finite nilpotent groups. The M\"obius function of a finite nilpotent group is the product of the M\"obius functions of its Sylow subgroups. For a finite $p$-group $G$ and $H<G$, 
\begin{equation}\label{1.2}
\mu(H,G)=\begin{cases}
(-1)^\alpha p^{\binom\alpha 2}&\text{if $H\vartriangleleft G$ and $G/H\cong\Bbb Z_p^\alpha$},\cr
0&\text{otherwise};
\end{cases}
\end{equation}
see \cite[\S\S2.7, 2.8]{Hall-QJM-1936}. In particular, the M\"obius functions of all finite abelian groups are known.

\medskip

\item
$\text{AGL}(1,\f_p)$ and $\text{PSL}(2,\f_p)$, where $p$ is a prime. The results are given in \cite[\S\S3.5, 3.9]{Hall-QJM-1936}. The M\"obius function of $\text{PGL}(2,\f_p)$, though not given in \cite{Hall-QJM-1936}, can be obtained in a similar manner. For an arbitrary prime power $q$ and $G=\text{PSL}(2,\f_q)$, the values of $\mu(H,G)$, $H<G$, were determined in \cite{Downs-JLMS-1991}; for $G=\text{PSL}(2,\f_q)$ and $\text{PGL}(2,\f_q)$, the values of $\mu(H,K)$ are computed in \cite{Cameron-Maimani-Omidi-Tayfeh-Rezaie-DM-2006, Cameron-Omidi-Tayfeh-Rezaie-E-JC-2006} for {\em some} pairs of subgroups $(H,K)$. However, for $G=\text{AGL}(1,\f_q), \text{PSL}(2,\f_q), \text{PGL}(2,\f_q)$,  the values $\mu(H,K)$ do not seem to have been determined for {\em all} pairs of subgroups $(H,K)$, $H<K<G$. 

\medskip

\item
For the symmetric group $S_n$, the determination of $\mu(1,S_n)$ is a challenging unsolved question which has generated many interesting results \cite{Monks-Dissertation-2012, Shareshian-JCTA-1997}. 

\end{itemize}

The purpose of the present paper is to determine the M\"obius function of the affine linear group $\text{AGL}(1,\f_q)$, where $q$ is an arbitrary prime power. The approach is based on that of \cite{Hou-arXiv1707.02315}. We use Rota's crosscut theorem for computing the M\"obius function of a finite lattice and the results from \cite{Hou-arXiv1707.02315} on the subgroups of $\text{AGL}(1,\f_q)$; these materials are reviewed in Sections~2 and 3. The M\"obius function of $\text{AGL}(1,\f_q)$ is determined in Section~4. 

\section{Rota's Crosscut Theorem}

Let $(L,\le)$ be a finite lattice with the minimum element $\hat 0$ and the maximum element $\hat 1$. A {\em lower crosscut} of $L$ is a set $A\subset L\setminus\{\hat 0\}$ such that for each $y\in L\setminus(A\cup\{\hat 0\})$, there exists $x\in A$ such that $x<y$. An {\em upper crosscut} of $L$ is a set $B\subset L\setminus\{\hat 1\}$ such that for each $y\in L\setminus(B\cup\{\hat 1\})$, there exists $x\in A$ such that $x>y$.

\begin{thm}[Rota's crosscut theorem {\cite[Theorem~3.1.9]{Kung-Rota-Yan-2009}}]\label{T2.1}
Let $L$ be a finite lattice and let $\mu$ denote its M\"obius function. If $A$ is a lower crosscut of $L$, then
\begin{equation}\label{2.1}
\mu(\hat 0,\hat 1)=\sum_{E:\, E\subset A,\, \bigvee\!E=\hat 1}(-1)^{|E|}.
\end{equation}
If $B$ is an upper crosscut of $L$, then
\begin{equation}\label{2.1}
\mu(\hat 0,\hat 1)=\sum_{E:\, E\subset B,\, \bigwedge\!E=\hat 0}(-1)^{|E|}.
\end{equation}
\end{thm}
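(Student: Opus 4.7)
The plan is to prove the lower-crosscut identity from first principles and then deduce the upper-crosscut identity by applying the same argument in the opposite lattice $L^{\text{op}}$, where joins and meets are interchanged and $\mu_{L^{\text{op}}}(\hat 1,\hat 0)=\mu_L(\hat 0,\hat 1)$. For each $y\in L$, define
\[
\alpha(y)=\sum_{\substack{E\subset A\\ \bigvee\!E=y}}(-1)^{|E|},
\]
adopting the convention $\bigvee\emptyset=\hat 0$. The goal is to establish $\alpha(z)=\mu(\hat 0,z)$ for every $z\in L$, so that specializing at $z=\hat 1$ gives the desired formula.

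The key step is the summatory identity $\sum_{y\le z}\alpha(y)=[z=\hat 0]$. Interchanging the order of summation,
\[
\sum_{y\le z}\alpha(y)=\sum_{\substack{E\subset A\\ \bigvee\!E\le z}}(-1)^{|E|}=\sum_{E\subset A\cap[\hat 0,z]}(-1)^{|E|},
\]
because the condition $\bigvee\!E\le z$ is equivalent to every element of $E$ lying below $z$. The right-hand sum equals $1$ if $A\cap[\hat 0,z]=\emptyset$ and $0$ otherwise, by a standard binomial cancellation. The crosscut hypothesis enters precisely here: since $\hat 0\notin A$ we have $A\cap[\hat 0,\hat 0]=\emptyset$; if $z\in A$ then $z$ itself witnesses nonemptiness; and if $z\in L\setminus(A\cup\{\hat 0\})$ the definition of a lower crosscut supplies some $x\in A$ with $x<z$, hence $x\in A\cap[\hat 0,z]$. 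Thus $A\cap[\hat 0,z]$ is empty if and only if $z=\hat 0$, which yields the identity. M\"obius inversion within $[\hat 0,z]$ then gives $\alpha(z)=\mu(\hat 0,z)$.

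For the upper-crosscut formula one applies the identity just proved to $L^{\text{op}}$, in which $B$ is a lower crosscut, $\bigvee_{L^{\text{op}}}=\bigwedge_L$, and the minimum element is $\hat 1$; the conclusion
\[
\mu_{L^{\text{op}}}(\hat 1,\hat 0)=\sum_{\substack{E\subset B\\ \bigwedge\!E=\hat 0}}(-1)^{|E|}
\]
translates back via $\mu_{L^{\text{op}}}(\hat 1,\hat 0)=\mu_L(\hat 0,\hat 1)$. I do not foresee a genuine obstacle: the entire content of the theorem is the observation that the crosscut axiom is exactly the combinatorial condition needed to force $A\cap[\hat 0,z]$ to be empty only at $z=\hat 0$, and once that is in place the remainder is routine M\"obius inversion.
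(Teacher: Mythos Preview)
Your argument is correct. The paper itself does not supply a proof of this theorem; it is quoted from \cite{Kung-Rota-Yan-2009} as a known tool, so there is no in-paper proof to compare against. What you wrote is in fact the standard proof: defining $\alpha(y)=\sum_{E\subset A,\ \bigvee E=y}(-1)^{|E|}$, computing the summatory function $\sum_{y\le z}\alpha(y)=\sum_{E\subset A\cap[\hat 0,z]}(-1)^{|E|}$, using the crosscut hypothesis to see that this equals $[z=\hat 0]$, and then inverting to obtain $\alpha(z)=\mu(\hat 0,z)$. The dualization to handle upper crosscuts is likewise standard and correct.
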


For finite groups $H\lneqq G$, consider the lattice of subgroups between $H$ and $G$ (inclusive). In this case, the set of all immediate supergroups of $H$ (the set of minimal elements of $\{K:H\lneqq K<G\}$) is a lower crosscut; the set of all maximal subgroups of $G$ containing $H$ is an upper crosscut.

\section{$\text{AGL}(1,\f_q)$ and Its Subgroups}

We define the affine linear group of dimension 1 over $\f_q$ as 
\begin{equation}\label{3.1}
\text{AGL}(1,\f_q)=\Bigl\{ \left[\begin{matrix} a&b\cr 0&1\end{matrix}\right]:a\in\f_q^*,\ b\in\f_q\Bigr\}<\text{GL}(2,\f_q).
\end{equation}
The group $\text{AGL}(1,\f_q)$ acts on $\f_q$ as follows: For $\left[\begin{smallmatrix} a&b\cr 0&1\end{smallmatrix}\right]\in\text{AGL}(1,\f_q)$ and $x\in\f_q$,
\begin{equation}\label{3.2}
\left[\begin{matrix} a&b\cr 0&1\end{matrix}\right]x=ax+b.
\end{equation}
We have
\begin{equation}\label{3.2.0}
\text{AGL}(1,\f_q)=A\ltimes B,
\end{equation}
where
\begin{equation}\label{3.3}
A=\Bigl\{ \left[\begin{matrix} a&0\cr 0&1\end{matrix}\right]:a\in\f_q^*\Bigr\}\cong\f_q^*,
\end{equation}
\begin{equation}\label{3.4}
B=\Bigl\{ \left[\begin{matrix} 1&b\cr 0&1\end{matrix}\right]:b\in\f_q\Bigr\}\cong\f_q.
\end{equation}
For each $H<\f_q$, define
\begin{equation}\label{3.5}
\overline H=\Bigl\{ \left[\begin{matrix} 1&h\cr 0&1\end{matrix}\right]:h\in H\Bigr\}.
\end{equation}
Let $p=\text{char}\,\f_q$. Let $\mathcal S$ be the set of triples $(a,b,H)$, where $a\in\f_q^*$, $H$ is an $\f_p(a)$-submodule of $\f_q$, and
\[
b
\begin{cases}
=0&\text{if}\ a=1,\cr
\in\f_q&\text{if}\ a\ne 1.
\end{cases}
\]
For $(a,b,H)\in\mathcal S$, define
\begin{equation}\label{3.6}
S(a,b,H)=\Bigl\langle\Bigl\{ \left[\begin{matrix} a&b\cr 0&1\end{matrix}\right]\Bigr\}\cup\overline H\Bigr\rangle<\text{AGL}(1,\f_q),
\end{equation}
where $\langle\ \rangle$ denotes the subgroup generated by a subset. It was proved in \cite{Hou-arXiv1707.02315} that
\begin{equation}\label{3.7.0}
S(a,b,H)=\Bigl\langle\left[\begin{matrix} a&b\cr 0&1\end{matrix}\right]\Bigr\rangle\ltimes\overline H
\end{equation}
and that the multiplicative order of $\left[\begin{smallmatrix} a&b\cr 0&1\end{smallmatrix}\right]$ equals that of $a$; see \cite[\S3.2]{Hou-arXiv1707.02315}.
We fix a generator $\gamma$ of $\f_q^*$.

\begin{thm}[\cite{Hou-arXiv1707.02315}]\label{T3.1}
\begin{itemize}
\item[(i)] Every subgroup of $\text{\rm AGL}(1,\f_q)$ is of the form $S(\gamma^{(q-1)/d},b,H)$, where $d\mid q-1$ and $(\gamma^{(q-1)/d},b,H)\in\mathcal S$. Moreover, $d$ and $H$ are uniquely determined by the subgroup and $b$ is uniquely determined modulo $H$.

\medskip

\item[(ii)] For each $(\gamma^{(q-1)/d},b,H)\in\mathcal S$, where $d\mid q-1$, we have
\begin{equation}\label{3.7}
\left[\begin{matrix} 1&c\cr 0&1\end{matrix}\right]S(\gamma^{(q-1)/d},b,H)\left[\begin{matrix} 1&c\cr 0&1\end{matrix}\right]^{-1}=S(\gamma^{(q-1)/d},0,H),
\end{equation}
where $c=0$ if $d=1$ and $c=b/(\gamma^{(q-1)/d}-1)$ otherwise.

\medskip

\item[(iii)] Let $(\gamma^{(q-1)/d},0,H)\in\mathcal S$, where $d\mid q-1$. If $d\ne 1$, the subgroups of $\text{\rm AGL}(1,\f_q)$ that contain $S(\gamma^{(q-1)/d},0,H)$ are precisely $S(\gamma^{(q-1)/d_1},0,H_1)$, where $d\mid d_1\mid q-1$, $H\subset H_1$, and $(\gamma^{(q-1)/d_1},0,H_1)\in\mathcal S$. If $d=1$, the subgroups of $\text{\rm AGL}(1,\f_q)$ that contain $S(1,0,H)$ are precisely $S(\gamma^{(q-1)/d_1},b_1,H_1)$, where $d_1\mid q-1$, $H\subset H_1$, and $(\gamma^{(q-1)/d_1},b_1,H_1)\in\mathcal S$. 

\end{itemize}
\end{thm}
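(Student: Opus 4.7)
The plan is to prove the three parts in sequence, with (i) doing the structural heavy lifting that (ii) and (iii) then leverage.

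For part (i), I would exploit the semidirect product decomposition \eqref{3.2.0}. Given an arbitrary subgroup $G<\text{AGL}(1,\f_q)$, set $\overline H=G\cap B$; since $B\cong\f_q$, this corresponds to an additive subgroup $H$ of $\f_q$. The image of $G$ under the projection onto $A\cong\f_q^*$ is cyclic, hence of the form $\langle\gamma^{(q-1)/d}\rangle$ for a unique $d\mid q-1$. Choosing any preimage $\left[\begin{smallmatrix}\gamma^{(q-1)/d}&b\\0&1\end{smallmatrix}\right]\in G$ of that generator, one obtains $G=S(\gamma^{(q-1)/d},b,H)$. The requirement that $H$ be an $\f_p(\gamma^{(q-1)/d})$-submodule is forced by the normality of $G\cap B$ in $G$: conjugation by $\left[\begin{smallmatrix}\gamma^{(q-1)/d}&b\\0&1\end{smallmatrix}\right]$ acts on $\overline H$ as multiplication of $H$ by $\gamma^{(q-1)/d}$, and closure under addition is automatic. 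Uniqueness of $d$ and $H$ is intrinsic to $G$; uniqueness of $b$ modulo $H$ follows because any two preimages of the chosen generator of the $A$-projection differ by an element of $G\cap B=\overline H$.

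Part (ii) reduces to a direct calculation: $\left[\begin{smallmatrix}1&c\\0&1\end{smallmatrix}\right]\left[\begin{smallmatrix}a&b\\0&1\end{smallmatrix}\right]\left[\begin{smallmatrix}1&c\\0&1\end{smallmatrix}\right]^{-1}=\left[\begin{smallmatrix}a&b+c(1-a)\\0&1\end{smallmatrix}\right]$, whose top-right entry vanishes precisely when $c=b/(a-1)$. Since $B$ is abelian, conjugation by an element of $B$ fixes $\overline H$ pointwise, so the whole subgroup $S(\gamma^{(q-1)/d},b,H)$ maps to $S(\gamma^{(q-1)/d},0,H)$.

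For part (iii), I would apply (i) to a supergroup $K\supset S(\gamma^{(q-1)/d},0,H)$, writing $K=S(\gamma^{(q-1)/d_1},b_1,H_1)$. Projecting to $A$ yields $\langle\gamma^{(q-1)/d}\rangle\subset\langle\gamma^{(q-1)/d_1}\rangle$ and hence $d\mid d_1$; intersecting with $B$ yields $H\subset H_1$. The main obstacle, and the step I expect to be most delicate, is showing that when $d\ne1$ the parameter $b_1$ must lie in $H_1$, so that $K=S(\gamma^{(q-1)/d_1},0,H_1)$. Writing $\left[\begin{smallmatrix}\gamma^{(q-1)/d}&0\\0&1\end{smallmatrix}\right]\in K$ as a product of a power of $\left[\begin{smallmatrix}\gamma^{(q-1)/d_1}&b_1\\0&1\end{smallmatrix}\right]$ with an element of $\overline{H_1}$ leads to the condition $b_1\cdot t\in H_1$, where $t=(\gamma^{(q-1)/d}-1)/(\gamma^{(q-1)/d_1}-1)=1+\gamma^{(q-1)/d_1}+\cdots+\gamma^{(d_1/d-1)(q-1)/d_1}$ lies in the field $\f_p(\gamma^{(q-1)/d_1})$ and is nonzero (since $d\ne1$ forces $\gamma^{(q-1)/d}\ne1$). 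Because $H_1$ is a module over this field, $t$ acts invertibly on $H_1$, yielding $b_1\in H_1$. In the case $d=1$ the hypothesis $S(1,0,H)=\overline H\subset K$ contributes only $H\subset H_1$ and imposes no constraint on $b_1$. The converse containments, that every such $S(\gamma^{(q-1)/d_1},0,H_1)$ (or $S(\gamma^{(q-1)/d_1},b_1,H_1)$ when $d=1$) does contain $S(\gamma^{(q-1)/d},0,H)$, are immediate from the generating-set definition \eqref{3.6}.
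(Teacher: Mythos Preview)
Your argument is correct in all three parts; the semidirect-product reasoning in (i), the conjugation calculation in (ii), and the divisibility/module argument forcing $b_1\in H_1$ in (iii) all go through as you describe. One small omission: in (i) you should note that when the projection to $A$ is trivial (i.e.\ $d=1$) the subgroup lies entirely in $B$, so $b=0$ is forced, matching the convention in the definition of $\mathcal S$.

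As for comparison with the paper: there is nothing to compare. Theorem~\ref{T3.1} is not proved in this paper at all; it is quoted verbatim from the author's earlier work \cite{Hou-arXiv1707.02315} and used as input for the M\"obius-function computation in Section~4. Your write-up therefore supplies a self-contained proof where the paper simply cites one. The approach you take---intersect with the normal subgroup $B$, project to the quotient $A$, and analyze the resulting extension data---is the natural one for subgroups of a semidirect product with abelian kernel, and is presumably close in spirit to what appears in \cite{Hou-arXiv1707.02315}.
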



\section{The M\"obius Function of $\text{\rm AGL}(1,\f_q)$}

Let $S_1<S_2<\text{\rm AGL}(1,\f_q)$. Our goal is to determine $\mu(S_1,S_2)$. By Theorem~\ref{T3.1}, we may assume that
\begin{equation}\label{4.1}
S_i=S(\gamma^{(q-1)/d_i},0,H_i),\qquad i=1,2,
\end{equation}
where $d_1\mid d_2\mid q-1$, $H_1\subset H_2$, and $(\gamma^{(q-1)/d_i},0,H_i)\in\mathcal S$. To justify this assumption, we first assume that $S_1=S(\gamma^{(q-1)/d_1},0,H_1)$ by Theorem~\ref{T3.1} (i) and (ii). If $d_1\ne 1$, by Theorem~\ref{T3.1} (iii), $S_2=S(\gamma^{(q-1)/d_2},0,H_2)$. If $d_1=1$, a suitable conjugation in \eqref{3.7} takes $S_2$ to $S(\gamma^{(q-1)/d_2},0,H_2)$ while leaving $S_1$ unchanged.

For any positive integer $n$, $\mu(n)$ denotes the value of the classic M\"obius function at $n$, i.e.,
\[
\mu(n)=\begin{cases}
(-1)^t&\text{if $n$ is a product of $t$ distinct primes},\cr
0&\text{if $n$ is divisible by the square of a prime}.
\end{cases}
\]
Let $\mu_q$ be the M\"obius function of the lattice of finite dimensional vector spaces over $\f_q$: For finite dimensional $\f_q$-spaces $W\subset V$,
\begin{equation}\label{4.2} 
\mu_q(W,V)=(-1)^lq^{\binom l2},\qquad l=\dim_{\f_q}(V/W).
\end{equation}
We write $\mu_q(0,V)$ as $\mu_q(V)$. Recall that $p=\text{char}\,\f_q$. For any $d\in\Bbb Z$ with $p\nmid d$, let $p(d)$ denote the smallest power of $p$ that is $\equiv 1\pmod d$. 

\begin{thm}\label{T4.1}
Let $S_i=S(\gamma^{(q-1)/d_i},0,H_i)$, $i=1,2$, where $d_1\mid d_2\mid q-1$, $H_1\subset H_2$, and $(\gamma^{(q-1)/d_i},0,H_i)\in\mathcal S$. If $H_1$ is not an $\f_{p(d_2)}$-module, $\mu(S_1,S_2)=0$. If $H_1$ is an $\f_{p(d_2)}$-module,
\begin{equation}\label{4.3}
\mu(S_1,S_2)=\begin{cases}
\mu(d_2/d_1)\,\mu_{p(d_2)}(H_2/H_1)&\text{if}\ d_1\ne 1,\vspace{2mm}\cr
\displaystyle\frac{|H_2|}{|H_1|}\mu(d_2)\,\mu_{p(d_2)}(H_2/H_1)&\text{if}\ d_1=1.
\end{cases}
\end{equation}
\end{thm}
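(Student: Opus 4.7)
The plan is to apply Rota's crosscut theorem (Theorem~\ref{T2.1}) to $[S_1,S_2]$ using the upper crosscut $B$ of maximal proper subgroups of $S_2$ containing $S_1$, supplemented by a M\"obius recurrence argument for the $d_1=1$ case. By Theorem~\ref{T3.1}(iii) the members of $B$ are of two kinds: $M_r=S(\gamma^{(q-1)r/d_2},0,H_2)$ for primes $r\mid d_2/d_1$, and $N_{H',b}=S(\gamma^{(q-1)/d_2},b,H')$, where $H'$ runs over maximal $\f_{p(d_2)}$-submodules of $H_2$ containing $\f_{p(d_2)}\cdot H_1$, with $b=0$ if $d_1\ne 1$ and $b\in H_2/H'$ if $d_1=1$. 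A key computational fact is that for any $K=S(\gamma^{(q-1)/d},b,H)$ the subgroup of $K$ whose elements have trivial $a$-part is exactly $\overline H$; consequently the $H$-component of $\bigcap_i K_i$ is always $\bigcap_i H_{K_i}$.

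For the vanishing statement, assume $H_1$ is not an $\f_{p(d_2)}$-module, so that $\f_{p(d_2)}\cdot H_1\supsetneq H_1$. Every member of $B$ has $H$-component containing $\f_{p(d_2)}\cdot H_1$, and hence so does the $H$-component of $\bigwedge E$ for every $E\subset B$. No such $E$ has meet equal to $S_1$, and Rota's theorem yields $\mu(S_1,S_2)=0$.

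Now assume $H_1$ is an $\f_{p(d_2)}$-module. If $d_1\ne 1$, each $N_{H',b}$ has $b=0$ and the intersections decouple: for $E$ formed by $\{M_r\}_{r\in R}$ and $\{N_{H'}\}_{H'\in\mathcal{H}'}$, the meet $\bigwedge E$ has $d$-part $d_2/\prod_{r\in R}r$ and $H$-part $\bigcap_{H'\in\mathcal{H}'}H'$. The condition $\bigwedge E=S_1$ separates into $\prod_{r\in R}r=d_2/d_1$ (forcing $d_2/d_1$ to be squarefree with $R$ then uniquely determined) and $\bigcap\mathcal{H}'=H_1$, so the Rota sum factors as $\mu(d_2/d_1)\,\mu_{p(d_2)}(H_2/H_1)$; the two factors arise from the classical M\"obius identity on the divisor lattice of $d_2/d_1$ and Rota's theorem applied to the $\f_{p(d_2)}$-submodule lattice $[H_1,H_2]$, respectively.

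The main obstacle is the $d_1=1$ case. Two members $N_{H',b_1}$ and $N_{H',b_2}$ with $b_1\ne b_2\bmod H'$ intersect in $\overline{H'}$, a $d$-part-$1$ group, so the $d$-component of a meet can collapse without any $M_r$ contribution and a product factorization in the Rota sum fails. I would instead proceed by induction on $|S_2/S_1|$ using the recurrence $\mu(S_1,S_2)=-\sum_{S_1<K\le S_2}\mu(K,S_2)$. By \eqref{3.7}, conjugation by elements of $\overline{H_2}$ shows that $\mu(K,S_2)$ depends only on the parameters $(d,H)$ of $K=S(\gamma^{(q-1)/d},b,H)$ and not on $b$; moreover for each $(d,H)$ with $d>1$ there are $|H_2/H|$ choices of $b$ all contributing the same value, which by the inductive hypothesis and the cases already proved equals $\mu(d_2/d)\,\mu_{p(d_2)}(H_2/H)$ (vanishing unless $H$ is an $\f_{p(d_2)}$-module). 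Regrouping by $(d,H)$, separating off the $(d>1,\,H=H_1)$ contribution, and invoking $\sum_{d\mid d_2}\mu(d_2/d)=[d_2=1]$ causes the $H\supsetneq H_1$ terms to cancel telescopically, leaving exactly $|H_2/H_1|\,\mu(d_2)\,\mu_{p(d_2)}(H_2/H_1)$.
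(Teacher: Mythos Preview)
Your argument is correct and takes a genuinely different route from the paper's. The paper works throughout with the \emph{lower} crosscut of immediate supergroups of $S_1$; the resulting Rota sums involve $\f_{p(d_1)}$-subspaces (or $\f_p$-subspaces when $d_1=1$) whose $\f_{p(d_2)}$-span must equal $H_2/H_1$, and to evaluate these the paper proves and invokes an auxiliary identity (Lemma~\ref{L4.2}) relating $\mu_r$ and $\mu_q$ across a field extension $\f_r\subset\f_q$. Your upper-crosscut approach in the $d_1\ne 1$ case is arguably cleaner: because the maximal subgroups of $S_2$ already carry $\f_{p(d_2)}$-structure, the meet conditions decouple directly into a divisor-lattice factor and an $\f_{p(d_2)}$-subspace-lattice factor, with no extension-of-scalars issue and hence no need for Lemma~\ref{L4.2}. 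The price you pay is in the $d_1=1$ case, where the $b$-parameter on the $N_{H',b}$ spoils the product structure of the upper crosscut; you handle this instead by the M\"obius recurrence, using the already-established $d_1\ne 1$ formula together with conjugation by $\overline{H_2}$ to collapse the $b$-dependence, after which the identity $\sum_{d\mid d_2}\mu(d_2/d)=0$ kills every $H\supsetneq H_1$ contribution and leaves the desired term. The paper instead pushes the lower-crosscut computation through directly in the $d_1=1$ case via a more delicate analysis of when the set $\Delta(C)$ lies in $H_1$, again finishing with Lemma~\ref{L4.2}. The two proofs are of comparable length; yours trades one technical lemma for a change of method between the two cases.
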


We need some additional notation for the proof of Theorem~\ref{T4.1}. For each positive integer $n$, let $\mathcal P(n)$ be the set of prime divisors of $n$. If $P\subset\Bbb Z$ is finite, define $\Pi P=\prod_{e\in P}e$. For a vector space $V$ over a field $F$, $\Bbb P_F(V)$ denotes the set of all $1$-dimensional $F$-subspaces of $V$. For an $\f_r$-space $W$, let $\mathcal L_r(W)$ be the set of all $\f_r$-subspaces of $W$. While $\langle\ \rangle$ denotes the subgroup generated by a subset, $\langle\ \rangle^K$, where $K$ is a subfield of $\f_q$, denotes the $K$-span of a subset.  

\begin{proof}[Proof of Theorem~\ref{T4.1}]
Let $\phi:\f_q\to\f_q/H_1$ be the canonical homomorphism. Let $H_1'=\{x\in\f_q:xH_1\subset H_1\}$, which is the largest subfield $K$ of $\f_q$ such that $H_1$ is a $K$-module. We consider the cases $d_1\ne 1$ and $d_1=1$ separately.

\medskip
{\bf Case 1.} Assume that $d_1\ne 1$. By Theorem~\ref{T3.1}, the immediate supergroups of $S_1$ are 
\begin{equation}\label{4.4}
S(\gamma^{(q-1)/d_1e},0,H_1),\qquad e\in\mathcal P((|H_1'|-1)/d_1),
\end{equation}
and 
\begin{equation}\label{4.5}
S(\gamma^{(q-1)/d_1},0,\phi^{-1}(x)),\qquad x\in\Bbb P_{\f_{p(d_1)}}(\f_q/H_1).
\end{equation}
For $P\subset\mathcal P((|H_1'|-1)/d_1)$ and $X\subset \Bbb P_{\f_{p(d_1)}}(\f_q/H_1)$, by \cite[Lemma~4.3]{Hou-arXiv1707.02315},
\begin{align}\label{4.6}
&\Bigl\langle S_1\cup\Bigl(\bigcup_{e\in P}S(\gamma^{(q-1)/d_1e},0,H_1)\Bigr)\cup\Bigl(\bigcup_{x\in X}S(\gamma^{(q-1)/d_1},0,\phi^{-1}(x))\Bigr)\Bigr\rangle\\
=\,&S\Bigl(\gamma^{(q-1)/d_1\Pi P},\, 0,\, \Bigl\langle H_1\cup\Bigl(\bigcup_{x\in X}\phi^{-1}(x)\Bigr)\Bigr\rangle^{\f_{p(d_1\Pi P)}}\Bigr). \nonumber
\end{align}
Thus by Theorem~\ref{T2.1},
\begin{equation}\label{4.7}
\mu(S_1,S_2)=\sum_{\substack{P\subset\mathcal P((|H_1'|-1)/d_1),\, X\subset \Bbb P_{\f_{p(d_1)}}(\f_q/H_1)\cr d_1\Pi P=d_2,\, \langle H_1\cup(\bigcup_{x\in X}\phi^{-1}(x))\rangle^{\f_{p(d_2)}}=H_2}} (-1)^{|P|+|X|}.
\end{equation}
In \eqref{4.7}, the condition $d_1\Pi P=d_2$ implies two things: First, $d_2/d_1=\Pi P$ is square-free. Second, $d_2\mid |H_1'|-1$, and hence $H_1$ is an $\f_{p(d_2)}$-module. Therefore $\mu(S_1,S_2)=0$ unless $d_2/d_1$ is square-free and $H_1$ is an $\f_{p(d_2)}$-module. 

Now assume that $d_2/d_1$ is square-free and $H_1$ is an $\f_{p(d_2)}$-module. Then \eqref{4.7} becomes
\begin{align}\label{4.8}
\mu(S_1,S_2)\,&=(-1)^{|\mathcal P(d_2/d_1)|}\sum_{\substack{X\subset \Bbb P_{\f_{p(d_1)}}(\f_q/H_1)\cr \langle H_1\cup(\bigcup_{x\in X}\phi^{-1}(x))\rangle^{\f_{p(d_2)}}=H_2}} (-1)^{|X|}\\
&=\mu(d_2/d_1)\sum_{\substack{L\in\mathcal L_{p(d_1)}(H_2/H_1)\cr \langle L\rangle^{\f_{p(d_2)}}=H_2/H_1}}\ \sum_{\substack{X\subset \Bbb P_{\f_{p(d_1)}}(H_2/H_1)\cr \langle \bigcup_{x\in X}x\rangle^{\f_{p(d_1)}}=L}} (-1)^{|X|}.\nonumber
\end{align}
By Theorem~\ref{T2.1} again, the inner sum in \eqref{4.8} equals $\mu_{p(d_1)}(L)$. Hence 
\[
\mu(S_1,S_2)=\mu(d_2/d_1)\sum_{\substack{L\in\mathcal L_{p(d_1)}(H_2/H_1)\cr \langle L\rangle^{\f_{p(d_2)}}=H_2/H_1}}\mu_{p(d_1)}(L)=\mu(d_2/d_1)\,\mu_{p(d_2)}(H_2/H_1),
\]
where the last step follows from Lemma~\ref{L4.2} at the end of this section.

\medskip
{\bf Case 2.} Assume that $d_1=1$. To avoid a trivial situation, we assume that $d_2\ne 1$. Let $\mathcal B$ be a system of coset representatives of $H_1$ in $\f_q$. By Theorem~\ref{3.1}, the immediate supergroups of $S_1$ are 
\begin{equation}\label{4.9}
S(\gamma^{(q-1)/e},b,H_1),\qquad e\in\mathcal P(|H_1'|-1),\ b\in\mathcal B,
\end{equation}
and 
\begin{equation}\label{4.10}
S(1,0,\phi^{-1}(x)),\qquad x\in\Bbb P_{\f_p}(\f_q/H_1).
\end{equation}
For $C\subset \mathcal P(|H_1'|-1)\times\mathcal B$ and $X\subset\Bbb P_{\f_p}(\f_q/H_1)$, by \cite[Lemma~4.1]{Hou-arXiv1707.02315}, 
\begin{align}\label{4.11}
&\Bigl\langle S_1\cup\Bigl(\bigcup_{(e,b)\in C}S(\gamma^{(q-1)/e},b,H_1)\Bigr)\cup\Bigl(\bigcup_{x\in X}S(1,0,\phi^{-1}(x))\Bigr)\Bigr\rangle\\
=\,&S\Bigl(\gamma^{(q-1)/\Pi P},\, (\gamma^{(q-1)/\Pi P}-1)u,\, \Bigl\langle\Delta(C)\cup H_1\cup\Bigl(\bigcup_{x\in X}\phi^{-1}(x)\Bigr)\Bigr\rangle^{\f_{p(\Pi P)}}\Bigr),\nonumber
\end{align}
where $P\subset\mathcal P(|H_1'|-1)$ is defined by $C=\bigcup_{e\in P}(\{e\}\times B_e)$, $\emptyset\ne B_e\subset\mathcal B$ for all $e\in P$, $u=b/(\gamma^{(q-1)/e}-1)$ for some (any) $(e,b)\in C$, and 
\begin{equation}\label{4.12}
\Delta(C)=\Bigl\{\frac{b_1}{\gamma^{(q-1)/e_1}-1}-\frac{b_2}{\gamma^{(q-1)/e_2}-1}:(e_1,b_1), (e_2,b_2)\in C\Bigr\}.
\end{equation}
Note that both $P$ and $u$ depend on $C$. (If $C=\emptyset$, $\gamma^{(q-1)/\Pi P}-1=0$ and $u$ is irrelevant.) By Theorem~\ref{T2.1}, 
\begin{equation}\label{4.13}
\mu(S_1,S_2)=\sum_{\substack{C\subset \mathcal P(|H_1'|-1)\times\mathcal B,\, X\subset\Bbb P_{\f_p}(\f_q/H_1)\cr \Pi P=d_2,\, (\gamma^{(q-1)/d_2}-1)u\in H_2\cr \langle\Delta(C)\cup H_1\cup(\bigcup_{x\in X}\phi^{-1}(x))\rangle^{\f_{p(d_2)}}=H_2}} (-1)^{|C|+|X|}.
\end{equation}
We assume that $d_2$ is square-free and $H_1$ is an $\f_{p(d_2)}$-module. (Otherwise, $\mu(S_1,S_2)=0$ as we saw in Case 1.) Then \eqref{4.13} becomes 
\begin{equation}\label{4.14}
\mu(S_1,S_2)=\sum_{\substack{C\subset \mathcal P(d_2)\times\mathcal B\cr
P=\mathcal P(d_2),\, u\in H_2}}(-1)^{|C|}\sum_{\substack{X\subset\Bbb P_{\f_p}(\f_q/H_1)\cr \langle\Delta(C)\cup H_1\cup(\bigcup_{x\in X}\phi^{-1}(x))\rangle^{\f_{p(d_2)}}=H_2}}(-1)^{|X|}.
\end{equation}
We claim that if $\Delta(C)\not\subset H_1$, the inner sum in \eqref{4.14} is $0$. In fact, let $\widetilde H_1=\langle\Delta(C)\cup H_1\rangle^{\f_p}$. Then the inner sum equals
\begin{align*}
&\sum_{\substack{X\subset\Bbb P_{\f_p}(\f_q/H_1)\cr \langle \widetilde H_1/H_1\rangle^{\f_{p(d_2)}}+\langle \bigcup_{x\in X}x\rangle^{\f_{p(d_2)}}=H_2/H_1}}(-1)^{|X|}\cr
=\,& \sum_{\substack{X_1\subset\Bbb P_{\f_p}(H_2/H_1)\setminus \Bbb P_{\f_p}(\widetilde H_1/H_1)\cr \langle \widetilde H_1/H_1\rangle^{\f_{p(d_2)}}+\langle \bigcup_{x\in X_1}x\rangle^{\f_{p(d_2)}}=H_2/H_1}}(-1)^{|X_1|} \sum_{X_2\subset \Bbb P_{\f_p}(\widetilde H_1/H_1)}(-1)^{|X_2|}=0.
\end{align*}
Therefore, in \eqref{4.14}, we assume that $\Delta(C)\subset H_1$. This implies that $|B_e|=1$ for each $e\in P=\mathcal P(d_2)$; let $B_e=\{b_e\}$. Then $\Delta(C)\subset H_1$ if and only if there exists $c\in\mathcal B$ such that 
\[
\frac{b_e}{\gamma^{(q-1)/d_2}-1}\equiv c\pmod{H_1}\quad \text{for all}\ e\in\mathcal P(d_2).
\]
The condition $u\in H_2$ is equivalent to $c\in H_2$. Thus, to choose $b_e$, $e\in\mathcal P(d_2)$, such that $\Delta(C)\subset H_1$, we can first choose $c\in\mathcal B\cap H_2$; there are $|H_2/H_1|$ choices for $c$. Then all $b_e$, $e\in\mathcal P(d_2)$, are uniquely determined. Therefore \eqref{4.14} becomes
\begin{align*}
\mu(S_1,S_2)\,&=|H_2/H_1|(-1)^{|\mathcal P(d_2)|}\sum_{\substack{X\subset\Bbb P_{\f_p}(\f_q/H_1)\cr \langle H_1\cup(\bigcup_{x\in X}\phi^{-1}(x))\rangle^{\f_{p(d_2)}}=H_2}}(-1)^{|X|}\cr
&=\frac{|H_2|}{|H_1|}\mu(d_2)\sum_{\substack{L\in\mathcal L_p(H_2/H_1)\cr \langle L\rangle^{\f_{p(d_2)}}=H_2/H_1}}\ \sum_{\substack{X\subset\Bbb P_{\f_p}(H_2/H_1)\cr \langle\bigcup_{x\in X}x)\rangle^{\f_p}=L}}(-1)^{|X|}\cr
&=\frac{|H_2|}{|H_1|}\mu(d_2)\sum_{\substack{L\in\mathcal L_p(H_2/H_1)\cr \langle L\rangle^{\f_{p(d_2)}}=H_2/H_1}}\mu_p(L) \kern1.02cm\text{(by Theorem~\ref{T2.1})}\cr
&=\frac{|H_2|}{|H_1|}\mu(d_2)\,\mu_{p(d_2)}(H_2/H_1) \kern2cm \text{(by Lemma~\ref{L4.2})}.
\end{align*}
\end{proof}

The following lemma has been used in the proof of Theorem~\ref{T4.1}.

\begin{lem}\label{L4.2}
Let $\f_r\subset\f_q$ and let $V$ be a finite dimensional vector space over $\f_q$. Then
\[
\sum_{\substack{L\in\mathcal L_r(V)\cr \langle L\rangle^{\f_q}=V}}\mu_r(L)=\mu_q(V).
\]
\end{lem}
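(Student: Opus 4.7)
The plan is to set up a Möbius inversion on the lattice $\mathcal{L}_q(V)$ of $\mathbb{F}_q$-subspaces of $V$. For each $\mathbb{F}_q$-subspace $W \subset V$ define
\[
f(W) = \sum_{\substack{L \in \mathcal{L}_r(V) \\ \langle L \rangle^{\mathbb{F}_q} = W}} \mu_r(L),
\]
so that the claim is exactly $f(V) = \mu_q(V)$.

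The first step is to compute the summatory function of $f$. For an $\mathbb{F}_q$-subspace $U \subset V$ I would observe that $\langle L \rangle^{\mathbb{F}_q} \subset U$ holds precisely when $L \subset U$ (since $L$ is contained in its own $\mathbb{F}_q$-span). Therefore
\[
\sum_{\substack{W \in \mathcal{L}_q(V) \\ W \subset U}} f(W) \;=\; \sum_{L \in \mathcal{L}_r(U)} \mu_r(0,L),
\]
and by the defining identity of the Möbius function applied to the lattice $\mathcal{L}_r(U)$ with bottom $0$ and top $U$, the right-hand side equals $1$ if $U = 0$ and $0$ otherwise.

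The second step is to run Möbius inversion on $\mathcal{L}_q(V)$. Since we have just shown that the summatory function of $f$ over $\mathbb{F}_q$-subspaces contained in $W$ equals the indicator $[W = 0]$, inversion gives
\[
f(W) \;=\; \sum_{\substack{U \in \mathcal{L}_q(V) \\ U \subset W}} \mu_q(U,W)\,[U=0] \;=\; \mu_q(0,W) \;=\; \mu_q(W).
\]
Specializing to $W = V$ yields the desired identity.

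There is no real obstacle: the only thing to notice is the elementary but crucial fact that the condition $\langle L\rangle^{\mathbb{F}_q} \subset U$ collapses to $L \subset U$, which makes the summatory identity clean and allows the standard Möbius inversion to finish the job. The formula \eqref{4.2} for $\mu_q$ is not even needed in the argument; only the abstract defining property of the Möbius function in the two lattices $\mathcal{L}_r(U)$ and $\mathcal{L}_q(V)$ is used.
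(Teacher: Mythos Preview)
Your proof is correct and follows essentially the same approach as the paper: both define the function $f(W)=\nu(W)=\sum_{\langle L\rangle^{\f_q}=W}\mu_r(L)$, compute its summatory function over $\mathcal L_q(V)$ to be the indicator of $\{0\}$, and conclude $f=\mu_q$ (you by explicit M\"obius inversion, the paper by the uniqueness built into the defining relation of $\mu_q$). The only cosmetic differences are that you spell out the equivalence $\langle L\rangle^{\f_q}\subset U\iff L\subset U$ and note that the explicit formula \eqref{4.2} is never used.
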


\begin{proof}
For each $U\in\mathcal L_q(V)$, let
\[
\nu(U)=\sum_{\substack{L\in\mathcal L_r(V)\cr \langle L\rangle^{\f_q}=U}}\mu_r(L).
\]
Then
\begin{align*}
\sum_{\substack{W\in\mathcal L_q(V)\cr W\subset U}}\nu(W)\,&=\sum_{\substack{W\in\mathcal L_q(V)\cr W\subset U}}\sum_{\substack{L\in\mathcal L_r(U)\cr \langle L\rangle^{\f_q}=W}}\mu_r(L)=\sum_{L\in\mathcal L_r(U)}\mu_r(L)\cr
&=\begin{cases}
1&\text{if}\ U=\{0\},\cr
0&\text{otherwise}.
\end{cases}
\end{align*}
Hence $\nu=\mu_q$ on $\mathcal L_q(V)$, in particular, $\nu(V)=\mu_q(V)$.
\end{proof}



\end{document}